\swapnumbers \numberwithin{equation}{section}
\theoremstyle{plain}
\newtheorem{thm}{Theorem}[section]
\newtheorem{lem}[thm]{Lemma}
\newtheorem{prop}[thm]{Proposition}
\newtheorem{ass}[thm]{Assertion}
\theoremstyle{definition}
\newtheorem{defin}[thm]{Definition}
\newtheorem{rem}[thm]{Remark}
\newtheorem{ex}[thm]{Example}
\newtheorem{question}[thm]{Question}
\DeclareMathOperator{\as}{{\rm asdim}}
\def\scr{\mathcal}
\def\Z{{\mathbb Z}}
\def\N{{\mathbb N}}
\def\1{\hbox{\rm\rlap {1}\hskip.03in{\rom I}}}
\def\Bbbone{{\rm1\mathchoice{\kern-0.25em}{\kern-0.25em}
{\kern-0.2em}{\kern-0.2em}I}}
\long\def\forget#1\forgotten{} %
\newcommand\ver[1]{\marginpar{\tiny Changed in Ver \VER}}
\date{\today}
\begin{document}

\title[On property C]{Asymptotic dimension, decomposition complexity, and Haver's property C }

\author{Alexander  Dranishnikov$^1$}\footnote{ Supported by NSF grant  DMS-0904278}
\author[M.~Zarichnyi]{Michael Zarichnyi}%

\begin{abstract} The notion of  the decomposition complexity was introduced in~\cite{GTY}
using a game theoretical approach. We introduce a notion of straight  decomposition complexity and compare it with
the original as well with the asymptotic property C. Then we define a game theoretical analog of Haver's property
 C in the classical dimension theory and compare it with the original.
\end{abstract}

\address{Alexander N. Dranishnikov, Department of Mathematics, University
of Florida, 358 Little Hall, Gainesville, FL 32611-8105, USA}
\email{dranish@math.ufl.edu}

\address{Michael Zarichnyi, Department of Mechanics and Mathematics, Lviv University, 1 Universytetska Str., 79000 Lviv, Ukraine; Faculty of Mathematics and Natural Sciences,
University of Rzesz\'ow, Rejtana 16 A, 35-310 Rze\-sz\'ow, Poland}
\email{mzar@litech.lviv.ua}

\subjclass[2000]{Primary 54F45; Secondary 53C23}  

\keywords{Property C, asymptotic property C, finite decomposition complexity (FDC), straight FDC}

\maketitle

\section{Introduction}
Asymptotic dimension was introduced by Gromov to study finitely generated groups though the definition can
be applied for all metric spaces~\cite{Gr1}. It received a great deal of attention when Goulyang Yu proved
the Novikov higher signature conjecture for groups with finite asymptotic dimension~\cite{Yu1}. There were many other similar
results about groups and manifolds under assumption of finiteness of the asymptotic dimension or the asymptotic dimension of the
fundamental group~\cite{Dr2},\cite{Ba},\cite{CG},\cite{DFW}. When G. Yu introduced property A and proved the coarse Baum-Connes
for groups with property A, it was a natural problem to check whether every finitely presented group has this property. A construction
of a finitely presented group without property A was suggested by Gromov~\cite{Gr2} (see for detailed presentation~\cite{AD}). Gromov's random group construction
is an existence theorem. Still, it is a good question whether a given group (or class of groups) has property A. The answer is unknown for the
Thompson group $F$.

It turns out that the dimension theoretical approach to verification of property A proved to be quite productive. There are different extensions
of features of finite asymptotic dimension to asymptotically infinite dimensional spaces. Some of them came from the analogy with classical
dimension theory, some from function growth, and some from game theory. The asymptotic property C was defined in~\cite{Dr1} by analogy
with Haver's property C in the classical dimension theory. It was shown that the asymptotic property C implies the property A. Different versions
of asymptotic dimension growth were suggested in ~\cite{Dr3},\cite{CFY},\cite{DS1},\cite{DS2}. The property A was proven for groups with
the sublinear dimension growth~\cite{Dr1},\cite{CFY}, then for the polynomial dimension growth~\cite{Dr3}, and finally, for the subexponential dimension growth
~\cite{Oz}. Since dimension growth of a finitely generated group is at most exponential, this leaves a question whether the property A is
equivalent to a subexponential dimension growth.

The notion of decomposition complexity of a metric space was introduced in~\cite{GTY} (see also~\cite{NY}) in game theoretic terms.
It was shown that the finite decomposition complexity (FDC) implies property A. The finite decomposition complexity was verified for a large class
of groups, in particular for all countable subgroups of $GL(n,K)$ for an arbitrary field $K$~\cite{GTY}. In this paper we address the question what is the relation between asymptotic property C and FDC. It turns out that in the classical dimension theory there is no analog of FDC.
We plan to present one and compare it with Haver's property C in a future publication. Here to make a comparison of FDC and the asymptotic property C
we introduce the notion of the straight finite decomposition complexity sFDC opposed to the game theoretic finite decomposition complexity=gFDC=FDC.
We prove the following implications
$$
gFDC\ \Rightarrow\ sFDC\ \Rightarrow\ property\ A
$$
and
$$
\text{\em asymptotic property}\ C\ \Rightarrow sFDC.
$$
We know that the last implication is not an equivalence for metric spaces. We don't know if it is an equivalence
for groups.
Also we do not know if any of the first two implications is reversible.

In Section 5 of the paper we compare a game-theoretic approach with the standard in the classical dimension theory.
We did the comparison for the Haver's property C and found that a game-theoretic analog of it defines the countable dimensionality. It is known that these classes are different~\cite{E}.

We are thankful to Takamitsu Yamauchi for spotting a gap in the first version of our paper and supplying a reference to the work of L. Babinkostova.

\section{Preliminaries}

All spaces are assumed to be metrizable.

A generic metric is denoted by $d$. Given two nonempty subsets $A,B$ of $X$, we let $d(A,B)=\inf\{d(a,b)\mid a\in A,\ b\in B\}$.

Let $R>0$. We say that a family $\mathcal A$ of nonempty subsets of $X$ is $R$-disjoint if $d(A,B)>R$, for every $A,B\in\mathcal A$.

A metric space $X$ is geodesic if for every $x,y\in X$ there exists an isometric embedding $\alpha\colon [0,d(x,y)]\to X$ such that $\alpha(0)=x$ and $\alpha(d(x,y))=y$.

A metric space $X$ is discrete if there exists $C>0$ such that $d(x,y)\ge C$, for every $x,y\in X$, $x\neq y$.  A discrete metric space is said to be of bounded geometry if there
exists a function $f \colon\mathbb R_+ \to \mathbb R_+$ such that every ball of radius $r$ contains at most $f(r)$
points.

Let  $\mathcal X,\mathcal Y$ be  families of metric spaces and $R>0$. We say that $\mathcal X$ is $R$-decomposable over $\mathcal Y$ if, for any $X\in\mathcal X$, $X=\bigcup(\mathcal V_1\cup\mathcal V_2)$, where $\mathcal V_1,\mathcal V_2$ are $R$-disjoint families and $\mathcal V_1\cup\mathcal V_2\subset\mathcal Y$.

A family $\mathcal X$  of metric spaces is said to be {\em bounded} if $$\mathrm{mesh}(\mathcal X)=\sup\{\mathrm{diam}\, X\mid X\in\mathcal X\}<\infty.$$

 Let $\mathfrak A$ be a collection of metric families. A metric family $\mathcal X$ is {\em decomposable
over} $\mathfrak A$  if, for every $r > 0$, there exists a metric family $\mathcal Y \in\mathfrak A$ and an $r$-decomposition of
$\mathcal X$ over $\mathcal Y$.

\begin{defin} \cite{GTY}
We introduce the metric decomposition game of two players, a defender and a challenger. Let $\mathcal X = \mathcal Y_0$ be
the starting family. On the first turn the challenger asserts $R_1>0$, the defender responds by exhibiting an $R_1$-decomposition of $\mathcal Y_0$
over a new metric family $\mathcal Y_1$. On the second turn, the challenger asserts an integer $R_2$,
 the defender responds by exhibiting an $R_2$-decomposition of $\mathcal Y_1$ over a new metric family $\mathcal Y_2$. The game continues in this way, turn
after turn, and ends if and when the defender produces a bounded family. In this case the defender has won.

A metric family $\mathcal X$ has FDC if the defender has always the winning strategy. A metric space $X$ has FDC if  the family $\{X\}$ does.
\end{defin}
\begin{defin} We say that a metric space $X$ satisfies the {\em straight  Finite Decomposition Property} (sFDC) if, for any sequence $R_1<R_2<\dots$ of positive numbers, there exists $n\in\mathbb N$ and metric families such that $\mathcal V_1,\mathcal V_2,\dots, \mathcal V_n$ such that  $\{X\}$ is $R_1$-decomposable over $\mathcal V_1$,  $\mathcal V_i$ is $R_i$-decomposable over $\mathcal V_{i+1}$, $i=1,\dots, n-1$, and the family $\mathcal V_n$ is bounded.
\end{defin}

The following easy follows from the definition.
\begin{prop}\label{2}
Suppose that $X$ has the FDC. Then it has straight FDC.
\end{prop}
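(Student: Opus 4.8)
The plan is to extract from a winning strategy for the defender a single finite play that happens to follow any prescribed increasing sequence of radii. The key observation is that in the decomposition game the challenger is allowed to assert \emph{any} positive real (the "integer" in the second turn of the definition is a harmless slip — nothing changes if the challenger names reals), so we may simply have the challenger play the prescribed sequence $R_1<R_2<\dots$ against the winning strategy. Since the strategy is winning, the resulting play terminates: there is some $n$ after which the defender has produced a bounded family. Reading off the metric families $\mathcal V_1,\dots,\mathcal V_n$ that the defender exhibited on turns $1,\dots,n$ gives exactly the data required by the definition of sFDC.

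More precisely, I would argue as follows. Fix a winning strategy $\sigma$ for the defender starting from the family $\{X\}=\mathcal Y_0$, and fix an arbitrary sequence $R_1<R_2<\cdots$ of positive numbers. Define inductively a play of the game in which on turn $i$ the challenger asserts $R_i$ and the defender responds according to $\sigma$, producing an $R_i$-decomposition of $\mathcal Y_{i-1}$ over a new family $\mathcal Y_i$. Because $\sigma$ is a winning strategy, this particular play must end after finitely many turns; let $n$ be the turn on which the defender first produces a bounded family, so $\mathcal Y_n$ is bounded. Setting $\mathcal V_i=\mathcal Y_i$ for $i=1,\dots,n$, we have by construction that $\{X\}$ is $R_1$-decomposable over $\mathcal V_1$, that $\mathcal V_i$ is $R_{i+1}$-decomposable over $\mathcal V_{i+1}$ for $1\le i\le n-1$ (indexing matches the definition of sFDC up to the trivial reindexing of which radius is used at which stage, and since the radii are increasing this only makes the disjointness hypotheses stronger), and that $\mathcal V_n$ is bounded. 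Hence $X$ has sFDC.

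The only point requiring a word of care — and the closest thing to an "obstacle" — is the bookkeeping about which radius labels which decomposition step: the sFDC definition asks for $\mathcal V_i$ to be $R_i$-decomposable over $\mathcal V_{i+1}$, whereas the game as literally stated uses radius $R_{i}$ to decompose $\mathcal Y_{i-1}$. One resolves this by simply having the challenger shift the sequence (play $R_1,R_2,\dots$ so that step $i$ uses $R_i$), or by noting that an $R$-decomposition is a fortiori an $R'$-decomposition whenever $R\ge R'$, so enlarging the radii at the relevant stages is free. Either way there is no real content here. There is also nothing delicate about the termination: it is immediate from the definition of "winning strategy," since a winning strategy by definition forces the game to end with the defender having produced a bounded family, and the particular sequence of challenger moves we chose is a legitimate sequence of moves.

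I would close by remarking that this argument is uniform: it shows not merely that an sFDC-witnessing $n$ exists for each sequence $(R_i)$, but that it can be taken to be the length of the play dictated by any fixed winning strategy, which is why the implication $gFDC\Rightarrow sFDC$ is "easy" in the sense advertised before the proposition. No converse is claimed, consistent with the discussion in the introduction.
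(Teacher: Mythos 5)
Your argument is correct and is exactly the ``easy consequence of the definitions'' that the paper invokes: the paper gives no written proof of Proposition~\ref{2}, stating only that it follows from the definitions, and your proof (play the prescribed sequence $R_1<R_2<\cdots$ against a fixed winning strategy, use termination of the winning play to extract $n$ and the families $\mathcal V_1,\dots,\mathcal V_n$) is precisely that derivation. Your handling of the indexing mismatch via the observation that an $R$-decomposition is an $R'$-decomposition for $R'\le R$ is also the right fix for the off-by-one in the stated definition of sFDC.
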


We recall that the {\em asymptotic dimension} of a metric space does not exceed $n$, $\as X\le n$ if for every $R>0$ there are
uniformly bounded $R$-disjoint families  $\mathcal U_i$, $i=0,\dots,n$ of sets in $X$ such that  the family $\cup_{i=1}^n\mathcal U_i$ is
a cover of $X$~\cite{Gr1}.

The following notion was introduced in \cite{Dr1}.
\begin{defin} A metric space $X$ is said to have the {\em asymptotic property C} if for every sequence $R_1<R_2<\dots$ there exists $n\in\mathbb N$ and uniformly bounded $R_i$-disjoint families $\mathcal U_i$, $i=1,\dots,n$, such that the family $\cup_{i=1}^n\mathcal U_i$ is a cover of $X$.
\end{defin}

We recall the definition of the property A~\cite{Yu2}.

\begin{defin} Let $X$ be a discrete metric space. We say that $X$ has Property A if for
every $\varepsilon > 0$ and $R > 0$ there exists a collection of finite subsets $\{A_x\}_{x\in X}$, $A_x \subset X \times\mathbb N$ and
a constant $S > 0$ such that
\begin{enumerate}
\item $\frac{\#(A_x \Delta A_y)}{\#(A_x \cap A_y)}\le\varepsilon$ when $d(x, y) < R$.
\item $ A_x \subset B(x, S ) \times \mathbb N$.
\end{enumerate}
\end{defin}

Let $X$ be a discrete metric space with
bounded geometry. It is known that $X$ has Property A if and only if for every $\varepsilon > 0$ and every $R > 0$ there
exists a probability measure valued map $x\mapsto \xi_x\colon X\to\ell_{1}(X)$   and a number $S > 0$ such that
\begin{enumerate}
\item  $\|\xi_x -\xi_y\|_1< \varepsilon$ if $d(x, y) < R$ and
\item $\mathrm{supp} (\xi_x )\subset B(x, S )$
\end{enumerate}
(see, e.g., \cite{HR}).

A map $f\colon X\to Y$ of metric spaces is bornologous (coarse uniform) if there exists a function $\varphi\colon [0,\infty)\to[0,\infty)$ such that $d(f(x),f(y))\le \varphi(d(x,y))$, for all $x,y\in X$. A map is metrically proper if the preimage of every bounded set is bounded. A map is coarse if it is both coarse uniform and metrically proper.

We say that two maps $f,g\colon X\to Y$ are close if $$d(f,g)=\sup\{d(f(x),g(x))\mid x\in X\}<\infty.$$

A map $f\colon X\to Y$ is a coarse equivalence if there exists a map $g\colon Y\to X$ such that the compositions $gf$ and $fg$ are close to the identity maps $1_X$ and $1_Y$ respectively.

These notions of coarse geometry have their counterparts for the families of metric spaces. A map $F\colon\mathcal X\to\mathcal Y$ of a metric family
$\mathcal X$ into a metric family
$\mathcal Y$ is a collection of maps $F=\{f\}$, where every $f$ maps some $X\in\mathcal X$ into some $Y\in \mathcal Y$.

We say that  $F\colon\mathcal X\to\mathcal Y$ is coarse uniform if there exists a nondecreasing function $\varrho\colon [0,\infty)\to[0,\infty)$ such that $d(f(x), f(y)) \le\varrho(d(x, y))$, for every $f\in F$ and every $x,y$ in the domain of $f$.

A family $F\colon\mathcal X\to\mathcal Y$ is metrically proper if there exists a nondecreasing function $\delta\colon [0,\infty)\to[0,\infty)$ such that $\delta(f(x), f(y)) \le d(x, y)$, for every $f\in F$ and every $x,y$ in the domain of $f$.

\section{Straight Finite decomposition complexity}

\begin{thm}[Coarse Invariance]\label{t:coarse} The property sFDC is  coarse invariant.
\end{thm}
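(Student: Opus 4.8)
The plan is to show that if $X$ and $X'$ are coarsely equivalent metric spaces and $X$ has sFDC, then $X'$ has sFDC. By symmetry it suffices to handle one direction. The key observation is that sFDC is preserved under two elementary operations: (i) passing to a subspace (indeed, if $\{X\}$ is $R$-decomposable over $\mathcal V$, then any $Y\subseteq X$ inherits a decomposition over $\{V\cap Y : V\in\mathcal V\}$, which is again bounded if $\mathcal V$ was), and (ii) applying a bornologous surjection with controlled preimages. More precisely, the main lemma I would isolate is: if $f\colon X\to Y$ is a surjective bornologous map such that for some constant $C$ every fiber $f^{-1}(y)$ has diameter $\le C$ (equivalently $f$ is a ``$C$-quasi-isometry'' onto its image in the weak sense that preimages of bounded sets are bounded with linear control), and $X$ has sFDC, then $Y$ has sFDC. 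This is the heart of the argument.

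To prove that lemma, fix a sequence $R_1<R_2<\dots$ for $Y$. Let $\varrho$ be a nondecreasing control function for $f$, so $d(f(x),f(x'))\le\varrho(d(x,x'))$. Because $f$ is coarse, it is also metrically proper with some control $\delta$; set $R_i' = \delta^{-1}$-type bounds so that $R_i'$-disjointness downstairs in $Y$ is guaranteed once we have $S_i$-disjointness upstairs in $X$, where $S_i$ is chosen large enough (using that preimages of $R_i$-separated sets under a coarse map are separated by a quantity growing to infinity with $R_i$). Concretely, choose $S_i$ with the property that $d(A,B)\le S_i \Rightarrow d(f(A),f(B))\le$ (something), and invert this: we want $d(f^{-1}(U),f^{-1}(V)) > \varrho^{-1}$-preimage bound whenever $d(U,V)>R_i$; since $f$ is proper this can be arranged. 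Now apply sFDC of $X$ to the sequence $S_1<S_2<\dots$: we get $n$ and metric families $\mathcal W_1,\dots,\mathcal W_n$ with $\{X\}$ $S_1$-decomposable over $\mathcal W_1$, each $\mathcal W_i$ $S_i$-decomposable over $\mathcal W_{i+1}$, and $\mathcal W_n$ bounded. Push forward: set $\mathcal V_i = \{f(W) : W\in\mathcal W_i\}$. Since $f$ is bornologous, $\mathcal V_n$ is bounded (mesh bounded by $\varrho(\mathrm{mesh}\,\mathcal W_n)$). The decomposition property transfers because the image of an $S_i$-disjoint family need not be $R_i$-disjoint in general — so instead one should be more careful and pull the challenger's numbers $R_i$ back through the \emph{inverse} of the coarse equivalence; that is, run the game upstairs with thresholds chosen so that images of the resulting disjoint families are $R_i$-disjoint downstairs. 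The clean way is: given $R_i$, since $f$ has a coarse inverse $g$ with $\varrho_g$ a control for $g$, a set that is $\varrho_g(R_i) + 2C + \dots$-disjoint upstairs maps to an $R_i$-disjoint family downstairs. So feed the sequence $S_i := \varrho_g(R_i) + (\text{constant depending on }i\text{ via the closeness bounds})$ — arranged increasing — into $X$'s sFDC.

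The main obstacle I anticipate is the bookkeeping of the disjointness constants: unlike asymptotic dimension where one works $R$ at a time, here we have a whole increasing sequence, and the coarse-equivalence distortion must be absorbed uniformly across all $n$ steps without knowing $n$ in advance. The resolution is that the control functions $\varrho,\varrho_g$ and the closeness constant $\lambda = \max\{d(gf,1_X), d(fg,1_Y)\}$ are fixed before the game starts, so one can define once and for all an increasing sequence $S_i$ (built from $R_i$ by applying $\varrho_g$ and adding $2\lambda$) and hand it to the defender's winning strategy for $X$; the output families, pushed forward by $f$, then satisfy all the required $R_i$-decomposition conditions. Verifying that ``$S$-disjoint upstairs $\Rightarrow$ $R$-disjoint downstairs'' and that boundedness is preserved under $f$ are both routine once the constants are pinned down. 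Finally, the step ``sFDC passes to subspaces'' handles the reduction from a general coarse equivalence (which need not be surjective) to the surjective case, by first replacing $X'$ with $f(X)\subseteq X'$ and noting $f(X)$ has sFDC by the lemma applied to $f\colon X\to f(X)$, then transferring back to $X'$ using that $f(X)$ is coarsely dense in $X'$ — which is again an instance of the subspace/bounded-neighborhood argument.
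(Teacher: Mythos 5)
Your argument is correct in substance, but it runs the transfer in the opposite direction from the paper, which costs you extra steps and contains one misstatement worth fixing. The paper assumes the \emph{target} $Y$ of the coarse equivalence $f\colon X\to Y$ has sFDC and pulls each decomposition back through $f^{-1}$: by \cite[Lemma 3.1.1]{GTY} the preimage of an $S_i$-decomposition of $Y$ is an $R_i$-decomposition of $X$ for suitable $S_i$ (this needs only that $f$ is bornologous, since preimages of well-separated sets under a bornologous map stay well separated), and by \cite[Lemma 3.1.2]{GTY} the preimage of the final bounded family is bounded; since preimages of a cover of $Y$ automatically cover $X$, no surjectivity issue arises. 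You instead push decompositions of $X$ forward along $f$, which is the same idea (it amounts to pulling back along the coarse inverse $g$), but it forces you to first restrict to $f(X)$ and then thicken back out to $Y$ via coarse density --- both steps work, via the nearest-point retraction $y\mapsto fg(y)$, but they lengthen the proof. The one genuine slip: your ``main lemma'' as stated --- a surjective bornologous map with uniformly bounded fibers pushes sFDC forward --- is false, because a bornologous bijection can collapse arbitrarily large distances to small ones (e.g.\ the identity map from $\mathbb N$ with its usual metric to $\mathbb N$ with $d(m,n)=|\sqrt m-\sqrt n|$ is bornologous with singleton fibers), so images of $S$-disjoint families need not be $R$-disjoint for any $S$. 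Your proof never actually uses that hypothesis; what you correctly use is the coarse inverse $g$ and the closeness constant $\lambda$ to set $S_i=\varrho_g(R_i)+2\lambda$, i.e.\ the hypothesis you need is that $f$ is a coarse equivalence onto its image. Restate the lemma in that form (or simply reverse the roles of $X$ and $Y$ and pull back, as the paper does) and the argument is complete.
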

\begin{proof} Let $f\colon X\to Y$ be a coarse equivalence and let $Y$ have the sFDC. Let $R_1<R_2<\dots$. By \cite[Lemma 3.1.1]{GTY}, there exists $S_1>0$ such that, whenever $Y=(\cup\mathcal Y_1)\sqcup  (\cup\mathcal Y_2)$ is an $S_1$-decomposition of $Y$, then $X=(\cup\mathcal X_1)\sqcup  (\cup\mathcal X_2)$, where $\mathcal X_i= \{f^{-1}(Z)\mid Y\in \mathcal Y_i\}$, $i=1,2$, is an $R_1$-decomposition of $Y$.

Next, we apply \cite[Lemma 3.1.1]{GTY} (which is actually formulated for metric families) to the maps $F_i\colon \mathcal X_i\to \mathcal Y_i$, $i=1,2$, and we obtain that there exists $S_2>0$ and an $S_2$-decomposition of every $Z\in \mathcal Y_1\cup\mathcal Y_2$ such that the preimages of the elements of this decomposition form an $R_2$-decomposition of $f^{-1}(Z)\in  \mathcal X_1\cup\mathcal X_2$.
We proceed similarly until the elements of the  $S_2$-decomposition of every $Z\in \mathcal Y_{n-1}\cup\mathcal Y_{n-1}$ form a  bounded metric family. Then their preimages also form a bounded metric family, by \cite[Lemma 3.1.2]{GTY}.

\end{proof}

\begin{prop}
Asymptotic property C implies straight FDC.
\end{prop}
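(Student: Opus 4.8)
The plan is to invoke asymptotic property C exactly once, for the given sequence, and then build the straight decomposition chain by peeling off the uniformly bounded families one at a time.

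First I would fix a sequence $R_1<R_2<\cdots$ and apply asymptotic property C of $X$ to it, obtaining $n\in\mathbb N$ and uniformly bounded families $\mathcal U_1,\dots,\mathcal U_n$ with $\mathcal U_i$ being $R_i$-disjoint and $\bigcup_{i=1}^n\mathcal U_i$ a cover of $X$. Put $Y_k=\bigcup_{j=k}^{n}\bigcup\mathcal U_j$ for $1\le k\le n+1$; then $Y_1=X$, $Y_{n+1}=\emptyset$, the sets $Y_k$ decrease, $\bigcup\mathcal U_k\subseteq Y_k$, and $Y_k=\bigl(\bigcup\mathcal U_k\bigr)\cup Y_{k+1}$. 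Now set
$$\mathcal V_k=\mathcal U_1\cup\cdots\cup\mathcal U_k\cup\{Y_{k+1}\}\quad(1\le k\le n-1),\qquad \mathcal V_n=\mathcal U_1\cup\cdots\cup\mathcal U_n,$$
each being a metric family of subsets of $X$ equipped with the induced metrics.

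The verification is then routine. The identity $X=Y_1=\bigcup(\mathcal U_1\cup\{Y_2\})$ shows that $\{X\}$ is $R_1$-decomposable over $\mathcal V_1$, since $\mathcal U_1$ is $R_1$-disjoint and the one-element family $\{Y_2\}$ is vacuously so. For $1\le k\le n-1$ I would decompose each member of $\mathcal V_k$ over $\mathcal V_{k+1}$ as follows: any member $U$ of some $\mathcal U_j$ with $j\le k$ is uniformly bounded and already lies in $\mathcal V_{k+1}$, so it is decomposed over $\mathcal V_{k+1}$ trivially, while the remaining member $Y_{k+1}$ is written as $Y_{k+1}=\bigcup(\mathcal U_{k+1}\cup\{Y_{k+2}\})$, using $\bigcup\mathcal U_{k+1}\subseteq Y_{k+1}$ and $Y_{k+2}\subseteq Y_{k+1}$; here $\mathcal U_{k+1}$ is $R_{k+1}$-disjoint (distances in the subspace $Y_{k+1}$ agree with those in $X$) and $\mathcal U_{k+1}\cup\{Y_{k+2}\}\subseteq\mathcal V_{k+1}$. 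Hence $\mathcal V_k$ is $R_{k+1}$-decomposable, a fortiori $R_k$-decomposable, over $\mathcal V_{k+1}$. Finally $\mathcal V_n$ is a finite union of uniformly bounded families, so $\mathrm{mesh}(\mathcal V_n)<\infty$, which is exactly the data required by the definition of sFDC.

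I do not expect a genuine obstacle: the argument is a straight translation of asymptotic property C into a decomposition chain. The only points that need care are the bookkeeping with the nested tails $Y_k$ (so that the leftover at stage $k$ is precisely $Y_{k+1}$ and each newly peeled family $\mathcal U_{k+1}$ really sits inside it), the observation that passing to the subspace $Y_{k+1}$ does not spoil $R_{k+1}$-disjointness, and the cosmetic handling of the last stage, where $Y_{n+1}=\emptyset$ and $Y_n=\bigcup\mathcal U_n$ is simply split as $\mathcal U_n$ with empty remainder into $\mathcal V_n$.
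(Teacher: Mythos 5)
Your proof is correct and is essentially the paper's argument, worked out in full: the paper likewise applies asymptotic property C once to the whole sequence and then peels off one uniformly bounded $R_i$-disjoint family per stage, carrying the not-yet-covered remainder as a single tile (the paper phrases the remainder as the complement of $\bigcup\mathcal U_1\cup\dots\cup\bigcup\mathcal U_k$ rather than your tail union $Y_{k+1}$, which is only a bookkeeping difference). Your version supplies the details the paper's two-line sketch omits, including the accumulation of the already-peeled families into $\mathcal V_k$ so that each carried piece is trivially decomposable over $\mathcal V_{k+1}$.
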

\begin{proof}
Given  a sequence $R_1< R_2< R_3<\dots$, apply property C condition to get $\scr U_1,\dots,\scr U_m$, $R_i$-disjoint uniformly bounded families that
cover $X$. Then we define
the partition of $X$ into $\scr U_1$ and the one element  family that consists of the complement to $\cup\scr U_1$. Then consider the intersection of $\scr U_2$ with the complement, and so on.
\end{proof}

\begin{prop}\label{p:geo} Every discrete metric sFDC space can be isometrically embedded into a geodesic metric sFDC space.
\end{prop}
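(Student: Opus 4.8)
The plan is to realize $X$ inside its \emph{complete metric graph} $\widehat X$, obtained by taking $X$ as vertex set and attaching, for each pair $x\ne y$, an edge $e_{xy}$ of length $d(x,y)$. Since $X$ is discrete, all nonzero distances exceed some $C>0$, so $\widehat X$ is a genuine metric space; the triangle inequality shows that the resulting path metric restricted to the vertices is again $d$, so $X\hookrightarrow\widehat X$ is an isometric embedding. Moreover $\widehat X$ is geodesic: a shortest path between two points leaves the edge containing its initial point through one of its endpoints, crosses at most one edge between vertices, and then enters the edge containing its terminal point, so geodesics exist in spite of the infinite valence of the vertices. It therefore remains to show that $\widehat X$ has sFDC.

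For this I would separate the part of $\widehat X$ near $X$ from the deep interiors of long edges. Fix $\rho>0$ and set $N_\rho(X)=\{p\in\widehat X: d(p,X)\le\rho\}$ and $C_\rho=\widehat X\setminus N_\rho(X)$. Then: (i) the inclusion $X\hookrightarrow N_\rho(X)$ is an isometry onto a $\rho$-dense subset, hence a coarse equivalence, so $N_\rho(X)$ has sFDC by \theoref{t:coarse}; and (ii) $C_\rho$ is a disjoint union of open geodesic segments, namely the middle portions of the edges of length $>2\rho$, and any two of them lie at distance $\ge 2\rho$, since a path joining points of two distinct edge interiors must pass through a vertex and so exit each of them to depth $>\rho$. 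Given a sequence $R_1<R_2<\dots$, the first move decomposes $\widehat X$ at scale $R_1$ over the family $\{N_{R_1}(X)\}\cup\{\,\text{components of }C_{R_1}\,\}$, the two required $R_1$-disjoint subfamilies being $\{N_{R_1}(X)\}$ and the collection of $2R_1$-separated segments. Thereafter I would process the $N_{R_1}(X)$-piece using the sFDC scheme of $X$ transported across the coarse equivalence (feeding $X$ scales inflated by a fixed amount comparable to $R_1$, to make room for the thickening), and simultaneously chop each segment into uniformly bounded subintervals; since $N_{R_1}(X)$ becomes a bounded family after finitely many steps and the segments are essentially $1$-dimensional, the game ends in finitely many moves.

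The hard part is making this last combination rigorous, i.e.\ ensuring that at \emph{every} stage the two families one assembles really are $R_j$-disjoint. The whole difficulty is due to the long edges: the interior of $e_{xy}$ stretches between neighborhoods of $x$ and of $y$, which the $X$-scheme may eventually place into pieces demanding a separation far exceeding the fixed peeling depth, so a naive parallel combination of the two sub-schemes can break down. The claim to be verified carefully is that one may keep all cells inside a fixed bounded neighborhood of $X$ (never peeling deeper), re-absorb the shells of the long edges into the vertex-cells of their endpoints at each step, and choose the auxiliary scales fed to $X$ large enough relative to that fixed depth; with this bookkeeping the segment family remains $R_j$-disjoint at every step and the scheme closes up. I expect this uniform control of the deep edge parts — and the precise choice of inflated scales — to be the technical heart of the proof.
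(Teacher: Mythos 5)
Your construction and overall strategy coincide with the paper's: it forms the same space $X'$ by attaching to each pair $x,y\in X$ a segment of length $d(x,y)$ and taking the length metric, and it likewise proves sFDC by splitting $X'$ according to distance from $X$ and invoking coarse invariance (Theorem~\ref{t:coarse}). The only cosmetic difference is that the paper two-colors the shells $K_n=\{p\in X'\mid n(R_1+1)\le d(p,X)<(n+1)(R_1+1)\}$, each at finite Hausdorff distance from $X$, whereas you keep a single neighborhood $N_{R_1}(X)$ and treat the deep parts of the edges directly as a $2R_1$-separated family of intervals; if anything your split is the cleaner one, since it uses only one copy of $X$ up to coarse equivalence.

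The ``technical heart'' you flag at the end is not actually there. In the definition of an $r$-decomposition of a family $\mathcal X$ over $\mathcal Y$, the two $r$-disjoint families are chosen \emph{separately for each member} of $\mathcal X$; no disjointness is required between pieces coming from different members. Hence after your first move produces the family $\{N_{R_1}(X)\}\cup\{\text{deep segments}\}$, the two sub-schemes run in parallel with no interaction: each deep segment is chopped in one move into two alternating $R_2$-disjoint colors of intervals of length about $2R_2$ and thereafter carried along unchanged (a bounded set is trivially $r$-decomposable over itself as a one-element family), while $N_{R_1}(X)$ --- which has sFDC by Theorem~\ref{t:coarse}, exactly as you say --- is processed against the tail sequence $R_2<R_3<\dots$ until it terminates in a bounded family; padding the shorter chain makes the two chains into one. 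The long edges cause no trouble because their deep interiors were excised at the very first step, and whatever separations the $X$-scheme later demands inside $N_{R_1}(X)$ are delivered by the coarse-invariance theorem rather than by hand. So no re-absorption of shells or inflation of scales is needed; your proof closes essentially as written.
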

\begin{proof} We follow the construction from \cite{Dr1}. Let $X$ be a discrete metric sFDC space. For any $x,y\in X$, attach isometrically the metric segment $[0,d(x,y)]$ to $X$ along its endpoints. Endow the obtained space, $X'$, by the length metric. We assume that $X$ naturally lies in $X'$.

Let $R_1<R_2<\dots$ be a sequence of positive numbers. Let $$K_n=\{ x\in X'\mid n(R_1+1)\le d(x,X)<(n+1)(R_1+1)\}.$$ Note that every $K_n$ is coarsely equivalent to $X$.  Indeed, $K_n$ lies in the $(n+1)(R_1+1)$-neighborhood of $X$. On the other hand, since $X$ is unbounded, for every $x\in X$ there is $y\in K_n$ with $d(x,y)<(n+1)(R_1+1)$, i.e. $X$ lies in the $(n+1)(R_1+1)$-neighborhood of $K_n$. Therefore, $X$ and $K_n$ are of finite Hausdorff distance and therefore coarsely equivalent.

We represent $X'$ as $\bigcup(\mathcal V_1\cup \mathcal V_2)$, where
$${\mathcal V}_1=\{X\}\cup\{K_n\mid n=2i+1,\ i\in\mathbb N\},$$ $${\mathcal V}_2=\{K_n\mid n=2i,\ i\in\{0\}\cup\mathbb N\}.$$

Now the result follows from the fact that $X$ has sFDC and Proposition \ref{t:coarse}.
\end{proof}

A metric space $X$ is discrete if there exists $C>0$ such that $d(x,y)\ge C$, for every $x,y\in X$, $x\neq y$.  A discrete metric space is said to be of bounded geometry if there
exists a function $f \colon\mathbb R_+ \to \mathbb R_+$ such that every ball of radius $r$ contains at most $f(r)$
points. We recall the definition of the property A~\cite{Yu2}.

\begin{defin} Let $X$ be a discrete metric space. We say that $X$ has Property A if for
every $\varepsilon > 0$ and $R > 0$ there exists a collection of finite subsets $\{A_x\}_{x\in X}$, $A_x \subset X \times\mathbb N$ and
a constant $S > 0$ such that
\begin{enumerate}
\item $\frac{\#(A_x \Delta A_y)}{\#(A_x \cap A_y)}\le\varepsilon$ when $d(x, y) < R$.
\item $ A_x \subset B(x, S ) \times \mathbb N$.
\end{enumerate}
\end{defin}

An arbitrary metric space has Property A if it contains a coarsely dense discrete subset with Property A.

Let $X$ be a discrete metric space with
bounded geometry. It is known that $X$ has Property A if and only if for every $\varepsilon > 0$ and every $R > 0$ there
exists a probability measure valued map $x\mapsto \xi_x\colon X\to\ell_{1}(X)$   and a number $S > 0$ such that
\begin{enumerate}
\item  $\|\xi_x -\xi_y\|_1< \varepsilon$ if $d(x, y) < R$ and
\item $\mathrm{supp} (\xi_x )\subset B(x, S )$
\end{enumerate}
(see, e.g., \cite{HR}).

\begin{thm}\label{3}
Straight FDC implies property A for  metric spaces.
\end{thm}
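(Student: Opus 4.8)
The plan is to imitate the known proof that finite asymptotic dimension — and more generally the asymptotic property C — implies property A, adapting it so that the decomposition is fed to us one scale at a time rather than all at once. The key technical input is that property A for a bounded geometry space $X$ is equivalent to the existence, for each $\eps>0$ and $R>0$, of a map $x\mapsto\xi_x\in\ell_{1,+}(X)$ with $\|\xi_x-\xi_y\|_1<\eps$ whenever $d(x,y)<R$ and $\operatorname{supp}\xi_x\subset B(x,S)$ for some uniform $S$; this is quoted in the Preliminaries. So, fixing $\eps>0$ and $R>0$, I must produce such a $\xi$ out of an sFDC decomposition.

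First I would choose a rapidly growing sequence $R_1<R_2<\dots$ — concretely $R_i$ large compared to $iR/\eps$ (the precise lower bounds will be dictated by the estimate at the end) — and apply sFDC to obtain an $n$ and metric families $\mathcal V_1,\dots,\mathcal V_n$ with $\{X\}$ being $R_1$-decomposable over $\mathcal V_1$, each $\mathcal V_i$ being $R_i$-decomposable over $\mathcal V_{i+1}$, and $\mathcal V_n$ bounded, say with mesh $\le D$. Unwinding the decompositions, $X$ is covered by the elements of $\mathcal V_n$, and each such piece is located inside a nested chain: a $\mathcal V_{n-1}$-piece, inside a $\mathcal V_{n-2}$-piece, $\dots$, inside all of $X$, where at level $i$ the pieces split into two $R_i$-disjoint subfamilies. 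The standard device is to build a partition-of-unity-type function at each level $i$ that, on a point $x$, records (a weighted indicator of) the $\mathcal V_i$-piece containing $x$, smeared out at scale $\sim R_i$ using the distance to the complement so that nearby points get nearly the same value; the two-color $R_i$-disjointness is exactly what lets these distance-to-complement bump functions be $1$-Lipschitz with disjoint supports within a color class. I would then form $\xi_x$ as a suitably normalized tensor/product combination of these $n$ level functions evaluated at $x$, living on $X\times\{1,\dots,n\}$ (or on $X$ after a standard identification), and whose total support is contained in the union of the $n$ nested pieces through $x$, hence in a ball $B(x,S)$ with $S$ depending only on the $R_i$ and $D$ — here using that each piece has controlled diameter because it sits inside a bounded union of $R_{i+1}$-separated sets of diameter $\le D$ (formally, an induction on $i$ bounding $\operatorname{diam}$ of a $\mathcal V_i$-piece).

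The main obstacle, and the place to be careful, is the $\ell_1$-estimate $\|\xi_x-\xi_y\|_1<\eps$ for $d(x,y)<R$. At each level $i$, moving from $x$ to $y$ changes the level-$i$ bump function by $O(R/R_i)$ (Lipschitz constant $1$ against a denominator $\sim R_i$), and these $n$ small errors accumulate through the product; choosing $R_i\gg nR/\eps$ from the start makes the telescoped difference at most $\eps$. The subtlety is that $n$ itself is only revealed after the decomposition is produced, so the sequence $R_i$ cannot be chosen with knowledge of $n$; the fix is to make $R_i$ grow fast enough in $i$ alone — e.g. $R_i\ge 2^i R/\eps$ — so that $\sum_i R/R_i$ is small regardless of where the game terminates, which is precisely the advantage the ``straight'' formulation still affords us. The remaining bookkeeping — verifying that the pieces through $x$ and through $y$ coincide except possibly at a boundary handled by the bump functions, and assembling the close-maps estimate into the Property~A inequality — is routine once the scales are fixed. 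I would also note at the outset that property A and this characterization are stated for discrete bounded geometry spaces, so the theorem is understood in that setting; for general metric spaces one passes to a coarsely dense discrete subspace, which inherits sFDC by Theorem~\ref{t:coarse}.
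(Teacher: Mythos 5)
Your strategy is essentially the paper's: both arguments convert the nested sFDC decomposition into probability measures in $\ell_{1,+}$ by taking, at each level, a distance-to-the-complement bump function supported on the enlarged tile through the point, multiplying across levels and normalizing; and both hinge on the same crucial observation that the scales $R_i$ must be prescribed as a function of $i$ alone, growing geometrically, because the depth of the decomposition is revealed only after the scales are committed (the paper takes $R_i=4^in$). The one genuine divergence is how the variation estimate is propagated to an arbitrary displacement $R$: you fold $R$ and $\varepsilon$ into the choice of scales from the outset (e.g.\ $R_i\ge 2^iR/\varepsilon$), so the telescoped product estimate directly yields $\|\xi_x-\xi_y\|_1<\varepsilon$ for $d(x,y)<R$, whereas the paper first proves a bound of the form $\|a^n(x)-a^n(y)\|_1\le d(x,y)^m/n$ valid for $d(x,y)\le 1$, upgrades it to $d(x,y)\le K$ by chaining along geodesics, and then treats general $X$ by isometrically embedding a discrete sFDC space into a geodesic sFDC space (Proposition~\ref{p:geo}) and restricting back via \cite{Ro}. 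Your route buys a shorter argument that dispenses with the geodesic embedding entirely, at the harmless cost of re-running the decomposition for each pair $(\varepsilon,R)$; your reduction of the general case to a coarsely dense discrete subspace via coarse invariance is also legitimate. When you write the details out, do record the lower bound on the normalizing sum $S_x$ (coming from the factors attached to the tile actually containing $x$) that is needed to control the normalized differences and the $|S_x-S_y|$ term against the count $\le 2^{n+1}$ of tiles meeting $\{x,y\}$ --- that is the one piece of bookkeeping that is not quite automatic.
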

\begin{proof}

We need some preliminary facts and assertions.

{\bf Open 2-covers.} An open cover of a metric space $(X,d)$ by two sets  $\scr U=\{U_+,U_-\}$
will be called a {\em 2-cover} of $X$. Suppose that  $X=\coprod V_i$.

For any 2-cover of $X$ we define a map $f\colon X\to P(\scr U)\subset\Delta\subset\ell_1(\scr U)$ to the space of probability measures supported on the 2-point set $\scr U$ by the formula $f(x)=\alpha_{U_+}(x)\delta_{U_+}+\alpha_{U_-}(x)\delta_{U_-}\in P(\mathcal U)$ where for $x\in V_i$,
$$
\alpha_{U_+}(x)= \frac{d(x,V_i\setminus U_+)}{S_x}\ \ \text{and}\ \ \alpha_{U_-}=\frac{d(x,V_i\setminus U_-)}{S_x}$$
where $S_x=d(x,V_i\setminus U_+)+d(x,V_i\setminus U_-))$.
\begin{ass}\label{2-cover}
Let $\scr U=\{U_+,U_-\}$ be a 2-cover of $X=\cup\mathcal V$, where $\mathcal V$ is a disjoint locally finite family.
Let $\lambda$ be a Lebesgue number of the restriction of the 2-cover $\scr U|_{V}$ to $V$ for all $V\in\mathcal V$. Then the map $f\colon X\to P(\scr U)$ is locally  $(6/\lambda)$-Lipschitz.
\end{ass}
\begin{proof}
Note that for $x,y\in X$ we obtain $$\|f(x)-f(y)\|_{\ell_1}=|\alpha_{U_+}(x)-\alpha_{U_-}(y) |+
|\alpha_{U_-}(x)-\alpha_{U_-}(y)|.$$
For $x,y\in V=V_i$ the triangle inequality for $d$ implies that
$$
|d(x,V\setminus U_+)-d(y,V\setminus U_+)|\le d(x,y).$$
This and the fact that $S_x\ge\lambda$ imply for the first summand
$$
\left |\frac{d(x,V\setminus U_+)}{S_x}-\frac{d(y,V\setminus U_+)}{S_y}\right |\le\frac{d(x,y)}{S_x}+d(y,V\setminus U_+)\left |\frac{1}{S_x}-\frac{1}{S_y}\right |$$
$$
=\frac{d(x,y)}{S_x}+\frac{d(y,X\setminus U_+)}{S_y}\frac{|S_x-S_y|}{S_x}\le\frac{3d(x,y)}{S_x}\le\frac{3d(x,y)}{\lambda}.$$
Similarly for the second summand
$$
\left |\frac{d(x,V\setminus U_-)}{S_x}-\frac{d(y,V\setminus U_-)}{S_y}\right |\le \frac{3d(x,y)}{\lambda}.$$
\end{proof}

{\bf Binary covers.} A {\em binary refinement} $\scr V$ of an open cover $\scr U$ is a cover which is obtained
by replacing every set $U\in\scr U$ by  two sets $U_+,U_-$ that form an open 2-cover of $U$.

An open cover $\scr U^m$ of a metric space $X$ is called {\em a binary cover} if it is constructed recursively
by taking $m$ binary refinements starting  from an open 2-cover $\scr U^1$. Note that $\scr U^m$
admits a tower of refinements
$$
\scr U^1>\scr U^2>\cdots>\scr U^m
$$
with the 2-to-1  refinement maps $\phi^{i+1}_i:\scr U^{i+1}\to\scr U^i$, $W\subset\phi^{i+1}_i(W)$.
Therefore, the cardinality of $\scr U^m$ is $2^m$. We use the notation $U_{\pm}$ for the preimage
$(\phi^{i+1}_i)^{-1}(U)$ of $U$. Thus, $\{U_+,U_-\}$ is an open 2-cover of $U$.

Additionally we assume that for each $U\in\scr U^i$, $i=1,\dots, m$, the sets $U_\pm$ admit disjoint decompositions $U_+=\coprod V^U$ and $U_-=\coprod W^U$.

For a binary cover $\scr U^m$ we define recursively a sequence of maps $f_i:X\to P(\scr U^i)$, $i=1,\dots, m$, to the  probability measures
on $\scr U^i$ as follows: We set $f_1(x)=\alpha_{U_+}(x)\delta_{U_+}+\alpha_{U_-}(x)\delta_{U_-} $ for the 2-cover $\scr U^1=\{U_+,U_-\}$ with $V=X$.  Then we define
$$\mu_U:U\to P((\phi^{2}_1)^{-1}(U))\subset P(\scr U^2)$$ to be the map $f$ for the 2-cover $(\phi^{i+1}_i)^{-1}(U)=\{U_+,U_-\}$ of $U$ for each of the two sets $U\in\scr U^1$.
Then we define
$$f_2(x)=\sum_{x\in U\in\scr U^1}\alpha_U(x)\mu_U(x).$$

Generally, if $f_i(x)=\sum_{U\in\scr U^i}\beta_U(x)\delta_U$, then
$$
f_{i+1}(x)=\sum_{x\in U\in\scr U^i}\beta_U(x)\mu_U(x)
$$
where $\mu_U:U\to P((\phi^{i+1}_i)^{-1}(U))\subset P(\scr U^{i+1})$ is the map $f$ defined for the 2-cover $(\phi^{i+1}_i)^{-1}(U)=\{U_+,U_-\}$ of $U$ for each of the $2^i$ sets $U\in\scr U^i$. Thus, $\mu_U(x)=\alpha_{U_+}(x)\delta_{U_+}+\alpha_{U_-}(x)\delta_{U_-}$. We may assume that
$\beta_U$ is defined on all of $X$ with $\beta_U(x)=0$ for $x\not\in U$. Thus, $
f_{i+1}(x)=\sum_{U\in\scr U^i}\beta_U(x)\mu_U(x)
$.

Note that $f_i=P(\phi^{i+1}_i)\circ f_{i+1}$.
\begin{ass}\label{binary cover}
Let $X$ be a geodesic metric space.
Let $\lambda_i$ be a Lebesgue number of the cover $\scr U^i$ restricted to each $V^U$ and each $W^U$ for all $U\in\scr U^{i-1}$, $i=1,\dots,m$. Then the map $f_m$ is $(3\sum_{i=1}^m\frac{2^i}{\lambda_i})$-Lipschitz.
\end{ass}
\begin{proof} Since $X$ is geodesic, it suffices to prove that  the map $f_m$ is locally $(3\sum_{i=1}^m\frac{2^{i}}{\lambda_i})$-Lipschitz. Let $d(x,y)\le 1$.

Induction on $m$. Assertion~\ref{2-cover} is the base of induction with $\scr V=\{X\}$.

Note that $$\|f_m(x)-f_m(y)\|_{\ell_1}=\|\sum_{x,y\in U\in\scr U^{m-1}}\beta_U(x)\mu_U(x)-\beta_U(y)\mu_U(y)\|_{\ell_1}=$$
$$\sum_{x,y\in U\in\scr U^{m-1}}\|\beta_U(x)\mu_U(x)-\beta_U(y)\mu_U(y)\|_{\ell_1}+$$

$$\sum_{y\not\in U, x\in U\in\scr U^{m-1}}\|\beta_U(x)\mu_U(x)-\beta_U(y)\mu_U(y)\|_{\ell_1}+$$
$$
\sum_{x\not\in U, y\in U\in\scr U^{m-1}}
\|\beta_U(x)\mu_U(x)-\beta_U(y)\mu_U(y)\|_{\ell_1}.$$

For each summand  the triangle inequality implies that
$$
\|\beta_U(x)\mu_U(x)-\beta_U(y)\mu_U(y)\|_{\ell_1}\le\|\beta_U(x)\mu_U(x)-\beta_U(x)\mu_U(y)\|_{\ell_1}+$$
$$|\beta_U(x)-\beta_U(y)|\|\mu_U(y)\|_{\ell_1}
\le\beta_U(x)\|\mu_U(x)-\mu_U(y)\|_{\ell_1}+|\beta_U(x)-\beta_U(y)|.$$
Similarly,
$$
\|\beta_U(x)\mu_U(x)-\beta_U(y)\mu_U(y)\|_{\ell_1}\le
\beta_U(y)\|\mu_U(x)-\mu_U(y)\|_{\ell_1}+|\beta_U(x)-\beta_U(y)|.$$
Thus, if $x\not\in U$ or $y\not\in U$ we have
$$
\|\beta_U(x)\mu_U(x)-\beta_U(y)\mu_U(y)\|_{\ell_1}\le |\beta_U(x)-\beta_U(y)|.$$

Note that by Assertion~\ref{2-cover} $\|\mu_U(x)-\mu_U(y)\|_{\ell_1}\le\frac{6d(x,y)}{\lambda_m}$  for sufficiently close $x$ and $y$.

Thus,
$$\|f_m(x)-f_m(y)\|_{\ell_1}\le\sum_{x,y\in U\in\scr U^{m-1}}\left (\beta_U(x)\frac{6d(x,y)}{\lambda_m}+|\beta_U(x)-\beta_U(y)|\right )+ $$
$$\sum_{y\not\in U, x\in U\in\scr U^{m-1}}|\beta_U(x)-\beta_U(y)|+\sum_{x\not\in U, y\in U\in\scr U^{m-1}}|\beta_U(x)-\beta_U(y)|\le $$
$$\sum_{U\in\scr U^{m-1}}\left(\beta_U(x)\frac{6d(x,y)}{\lambda_m}+|\beta_U(x)-\beta_U(y)|\right)$$
$$\le 6\frac{2^{m-1}d(x,y)}{\lambda_m}+\|f_{m-1}(x)-f_{m-1}(y)\|_{\ell_1}\le 3d(x,y)\left(\sum_{i=1}^{m-1}\frac{2^i}{\lambda_i}+\frac{2^{m}}{\lambda_m}\right).$$
The last inequality is by the induction assumption.
\end{proof}

Given $n\in\N$, we apply the definition of sFDC to $X$ with the sequence $R_i=4^in$
to obtain a nested sequence of partitions $\scr F_1>\scr F_2>\dots>\scr F_m$ that ends with a uniformly bounded partition. Recall that the partition
$\scr F_1$ splits in two $R_1$-disjoint families $\scr F_1=\scr F_1^+\cup \scr F_1^-$, and, generally,
each tile $F\in\scr F_i$ is decomposed into two $R_{i+1}$-disjoint families  $\scr F_{i+1}|_F=\scr F_{i+1}|_F^+\cup \scr F_{i+1}|_F^-$.

We construct a sequence of open
covers $\mathcal V_i$, $i=1,\dots, m$, of $X$ by enlarging the tiles of the partitions $\scr F_i$. For tiles $F$ in $\scr F_1$ we take
the open $R_1/3$-neighborhoods $V_F=N_{R_1/3}(F)$.
For each tile $F\in\scr F_2$ we take $V_F=N_{R_2/3}(F)\cap N_{R_1/3}(F')$ where $F'\in\scr F_1$ is the unique
tile such that $F$ is defined by the partitioning of $F'$ and so on. Thus for each $V\in\scr F_{i+1}$ there is a uniquely determined $V'\in\scr V_i$
such that $V\subset V'$.

Next we note that each cover $\scr V_i$ is a refinement of a binary cover $\scr U^i$ where $\scr U^i$ is defined recursively as follows.
The 2-cover $\scr U^1$ consists of $$U^+_1=\cup_{F\in\scr F_1^+} V_F\ \ \text{and}\ \ U^-_1=\cup_{F\in\scr F_1^-} V_F.$$
The cover $\scr U^2$ consists of four sets
$$U=\cup_{F'\in\scr F_1^\pm, F\in\scr F_2|_{F'}^\pm} V_F$$ an so on. We note that every element $U\in\scr U^i$ is the disjoint union of open sets from $\scr V_i$ called {\em components of} $U$.
In fact the components of every $U\in\scr U^i$ are $R_i/3$-disjoint. There is a well-defined component map $c_i:\scr V_i\to\scr U^i$ where $V$ is a component of $c_i(V)$.

Also note that $\lambda_i=R_i/3$ is a Lebesgue number of $\scr U^i$ restricted to each component.
\begin{ass}\label{lift}
Suppose that $X$ is a geodesic metric space. Then the map $f_m:X\to P(\scr U^m)$ admits a $(18/n)$-Lipschitz lift $g:X\to P(\scr V_m)$ with respect to the component map on the probability measures
$P(c_m):P(\scr V_m)\to P(\scr U^m)$.
\end{ass}
\begin{proof}
Note that for every $x\in U\in\scr U^m$ there is a unique component $V=V(U,x)\in\scr V_m$ that contains $x$.
This defines for each $x\in X$  a section $$s_x:supp(f(x))\to\scr V_m$$ of $c_m$ over the support of of $x$.
Then we set $g(x)=P(s_x)(f_m(x))$.

Since the components of $U\in\scr U^m$ are $(R_m/3)$-disjoint, for $x,y\in X$ with $d(x,y)\le 1$ we have $s_x\equiv s_y$ on the intersection $supp(f(x)\cap supp(f(y))$.
This implies that $$\|g(x)-g(y)\|_{\ell_1}=\|f(x)-f(y)\|_{\ell_1}.$$
By Assertion~\ref{binary cover},
$$\|f_m(x)-f_m(y)\|_{\ell_1}\le 3\sum_{i=1}^m\frac{2^{i}}{\lambda_i}\le 12\sum_{i=1}^m\frac{2^{i}}{4^in}\le\frac{12}{n}.
$$
Since $X$ is geodesic, it follows that $g$ is $(12/n)$-Lipschitz.
\end{proof}

For each $V\in\scr V_m$ we fix a point $y_v\in V$. This defines a map $h: P(\scr V_m)\to P(X)$.
For  every $x\in X$ we define a probability measure
$a^n(x)=hg(x)$.

Now we assume that $X$ is geodesic. Then  by Assertion~\ref{lift} $$\|a^n(x)-a^n(y)\|_{\ell_1}\le 12K/n$$
for all $n$, if $d(x,y)\le K$. Thus, the condition (1) of the property A is satisfied.
The measures  $a^n(x)$ have uniformly bounded supports since the cover $\scr V_m$ is uniformly bounded.
This takes care of the condition (2).

If $X$ is not geodesic, we assume, without loss of generality, that $X$ is discrete. Then we apply Proposition \ref{p:geo} and isometrically embed $X$ into a geodesic metric space $X'$. Since, by what is proved above, $X'$ has property A, the space $X$ has also property A, by \cite{Ro}.
\end{proof}

\begin{thm}[Finite Sum Theorem] Let $Z$  be a metric space such that $Z=X\cup Y$, where $X$ and $Y$ satisfy sFDC. Then $Z$ also satisfies sFDC.
\end{thm}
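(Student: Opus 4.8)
The plan is to deduce the statement from three elementary observations about metric decompositions, and then to build the straight decomposition tower for $Z$ by running the towers for $X$ and for $Y$ in parallel, after one cost-free initial step that splits $Z$ into the two subspaces $X$ and $Y$.

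First I would record the following facts, each immediate from the definitions. (i)~For every $R>0$ the family $\{Z\}$ is $R$-decomposable over the two-element family $\{X,Y\}$: take $\mathcal W_1=\{X\}$, $\mathcal W_2=\{Y\}$, so that $(\bigcup\mathcal W_1)\cup(\bigcup\mathcal W_2)=Z$; each $\mathcal W_i$ is a singleton and hence $R$-disjoint (under the standard convention, used elsewhere in this paper, that a one-element family is $R$-disjoint), while a set of $\mathcal W_1$ and a set of $\mathcal W_2$ may be arbitrarily close --- which is exactly why this first step is free. (ii)~Every metric family $\mathcal W$ is $R$-decomposable over itself for all $R$ (split each $W\in\mathcal W$ into $\{W\}$ together with the empty family); thus a bounded family may be repeated at the end of a tower without effect. (iii)~If $\mathcal U$ is $R$-decomposable over $\mathcal U'$ and $\mathcal W$ is $R$-decomposable over $\mathcal W'$, then $\mathcal U\cup\mathcal W$ is $R$-decomposable over $\mathcal U'\cup\mathcal W'$, and an $R$-decomposition is also an $R'$-decomposition whenever $R'\le R$. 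I would also note that a finite union of bounded families is bounded, and that for $A\subseteq Z$ a subfamily of $A$ is $R$-disjoint, resp.\ bounded, in the metric of $A$ iff it is so in the metric of $Z$; hence the straight decomposition towers produced by the sFDC of $X$ and of $Y$, for the induced metrics, are legitimate towers of subspaces of $Z$. (That $X$ and $Y$ may overlap is harmless, since a metric family is merely a collection of subspaces and our decompositions are covers, not partitions.)

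Now, given an increasing sequence $R_1<R_2<\dots$, I would apply the sFDC of $X$ to the shifted sequence $R_2<R_3<\dots$ to get metric families $\mathcal A_1,\dots,\mathcal A_a$ of subspaces of $X$ forming a straight decomposition of $\{X\}$ that ends with $\mathcal A_a$ bounded, and likewise obtain $\mathcal B_1,\dots,\mathcal B_b$ for $Y$. Using (ii) I pad the shorter tower by repeating its last (bounded) term so that $a=b=:m$, and then set $\mathcal V_1=\{X,Y\}$ and $\mathcal V_{i+1}=\mathcal A_i\cup\mathcal B_i$ for $i=1,\dots,m$. By (i), $\{Z\}$ is $R_1$-decomposable over $\mathcal V_1$; by (iii) applied stepwise to the $X$- and $Y$-towers, $\mathcal V_i$ is $R_i$-decomposable over $\mathcal V_{i+1}$ for $i=1,\dots,m$ (the single parameter drop one needs, at $i=1$, is supplied by the monotonicity clause of (iii)); and $\mathcal V_{m+1}=\mathcal A_m\cup\mathcal B_m$ is bounded. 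Hence $Z$ satisfies sFDC, and iterating the two-set case gives the statement for any finite union.

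The argument is almost entirely bookkeeping, and I do not expect a genuine obstacle: the only points needing care are the index alignment between the two shifted towers and the master tower for $Z$, and the convention that a one-element family counts as $R$-disjoint, without which the trivial splitting $Z=X\cup Y$ would not qualify as an $R$-decomposition over $\{X,Y\}$. The remaining ingredients --- monotonicity in the disjointness parameter, stability of $R$-disjointness and boundedness under passage to subspaces, and closure of boundedness under finite unions --- are all routine.
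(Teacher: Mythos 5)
Your argument is correct, but it follows a genuinely different route from the paper's. You open with the cost-free splitting of $\{Z\}$ over the two-element family $\{X,Y\}$, which is an $R$-decomposition for every $R$ because each of the families $\{X\}$ and $\{Y\}$ is a singleton and hence vacuously $R$-disjoint; that convention is legitimate here, and in fact the paper itself relies on it (in its proof of the Sum Theorem the singleton $\{Y(R_1)\}$ serves as one of the two $R_1$-disjoint families). You then run the straight towers for $X$ and for $Y$ against the shifted sequence $R_2<R_3<\dots$, merge them stagewise via your observation (iii), pad the shorter tower by the self-decomposition trick (ii), and use monotonicity in the disjointness parameter, which makes the bookkeeping of combining towers of different lengths completely explicit. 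The paper instead makes the very first step a genuine $R_1$-decomposition: it slices $Z$ into $X$ together with the annuli $\{z\in Z\mid (k-1)(R_1+1)<d(z,X)\le k(R_1+1)\}$, the two parities of $k$ giving the two $R_1$-disjoint families (each with many elements, so no singleton convention is needed at that step), and then finishes in one line by appealing to the fact that $X$ and every subspace of $Y$ satisfy sFDC. That one line tacitly uses the family version of sFDC (defined in the paper only after this theorem), the fact that one tower for $Y$ restricts simultaneously to all the annuli, and the same kind of tower-merging you spell out; so your proof is more elementary and more self-contained in its details, while the paper's annuli construction is essentially the finite shadow of its infinite Sum Theorem and avoids leaning on the trivial splitting. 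Your closing remark that iterating the two-set case handles any finite union is also correct.
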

\begin{proof} Let $R_1<R_2<R_3<\dots$. We consider an $R_1$-decomposition $Z=(\cup\mathcal Y_1)\cup(\cup\mathcal Y_2)$, where $\mathcal Y_1=\{X\}\cup \{\{z\in Z\mid (2k-1) (R_1+1)<d(z,X)\le 2k(R_1+1)\}\mid k\in\mathbb N\}$,
$\mathcal Y_2=\{\{z\in Z\mid 2k (R_1+1)<d(z,X)\le (2k+1)(R_1+1)\}\mid k\in\mathbb N\}$. Then the assertion follows from the fact that $X$ and every subspace of $Y$ satisfy sFDC.
\end{proof}

By a slight modification, one can define the notion of sFDC for the metric families. We say that a metric family $\mathcal X$ satisfies the sFDC, if, for any $R_1<R_2<\dots$, there exist $n\in\mathbb N$ and  metric families $\mathcal X_i$, $i=1,\dots,n$, such that $\mathcal X_1=\mathcal X$, $\mathcal X_n$ is a bounded family and $\mathcal X_{i+1}$ is $R_i$-decomposable over $\mathcal X_i$ for all $i=1,\dots, n-1$. Clearly, a family $\{X\}$ has the sFDC if and only if so does the space $X$. Also, any subfamily of a family that has the sFDC, has the sFDC as well.

\begin{thm}[Sum Theorem] Let $X$  be a metric space such that $X=\cup\mathcal X$, where $\mathcal X$ has sFDC   and the following
condition holds: for any $r>0$, there exists $Y(r)\subset X$ such that $Y(r)$ has sFDC and the family $\mathcal X(r)=\{X\setminus Y(r)\mid X\in \mathcal X\}$ is $r$-disjoint. Then $X$ also satisfies sFDC.
\end{thm}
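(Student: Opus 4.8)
The plan is to combine the hypothesis that $\mathcal X$ has sFDC with the ``uniform escape'' condition so that, given an increasing sequence $R_1<R_2<\dots$, we first peel off a controlled neighborhood of the exceptional set $Y(r)$ and then run the sFDC decomposition of the family $\mathcal X$ on what remains. The key observation is that for a suitable $r$ (depending on $R_1$), the set $X\setminus Y(r)$ meets the members of $\mathcal X$ in an $r$-disjoint family, so the ``interiors'' of the $X\in\mathcal X$ become $R_1$-disjoint after we also fatten $Y(r)$ by a collar; meanwhile $Y(r)$ itself has sFDC, so it can be handled in parallel.

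First I would fix the sequence $R_1<R_2<\dots$. Choose $r=r(R_1)$ large (at least $R_1+1$, but I expect one needs $r$ comparable to $2R_1$ to leave room for a collar) and set $Y=Y(r)$, which has sFDC by hypothesis. Write $\mathcal X(r)=\{X\setminus Y\mid X\in\mathcal X\}$; this family is $r$-disjoint and, being a subfamily-image of $\mathcal X$, still has sFDC (using the remark just before the statement that subfamilies of sFDC families are sFDC, together with coarse invariance, \theoref{t:coarse}, since $X\setminus Y$ sits coarsely inside $X\in\mathcal X$). Now partition $X$ by a ``collar'' construction around $Y$, exactly as in the Finite Sum Theorem and in \propref{p:geo}: put
$$
\mathcal Y_1=\{X\setminus Y\mid X\in\mathcal X\}\cup\{\,K_k\mid k=2j-1,\ j\in\mathbb N\,\},\qquad
\mathcal Y_2=\{Y\}\cup\{\,K_k\mid k=2j,\ j\in\mathbb N\,\},
$$
where $K_k=\{z\in X\mid (k-1)(R_1+1)<d(z,Y)\le k(R_1+1)\}$, and where we keep only the part of each $K_k$ outside $Y$. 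One checks that $X=(\cup\mathcal Y_1)\cup(\cup\mathcal Y_2)$ and that each of $\mathcal Y_1,\mathcal Y_2$ is $R_1$-disjoint: the members $X\setminus Y$ are mutually $r\ge R_1$-disjoint by hypothesis and are separated from the odd collars $K_k$ by the even collars, while distinct odd (resp.\ even) collars are separated by the intervening collar of the opposite parity. Thus $\{X\}$ is $R_1$-decomposable over $\mathcal Y_1\cup\mathcal Y_2$.

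It remains to decompose $\mathcal Y_1\cup\mathcal Y_2$ further. The family $\{Y\}$ has sFDC; the family $\{X\setminus Y\mid X\in\mathcal X\}$ has sFDC by the previous paragraph; and each collar $K_k$ is coarsely equivalent to $Y$ (it lies in a bounded neighborhood of $Y$ and, since the space is unbounded in the relevant directions, conversely — this is the same argument as in \propref{p:geo}), hence has sFDC, and the whole collection $\{K_k\}$ has sFDC uniformly because a single uniform bound on the neighborhood radii makes the family coarsely equivalent, as a family, to $\{Y\}$. So $\mathcal Y_1\cup\mathcal Y_2$ is a finite union of families each having sFDC; applying the definition of sFDC to each of them with the tail $R_2<R_3<\dots$ and taking the maximum of the resulting lengths $n_1,\dots$, and then padding the shorter chains with repetitions of their terminal bounded family, we obtain a common $n$ and a chain $\mathcal V_1,\dots,\mathcal V_n$ realizing the $R_i$-decompositions, ending in a bounded family. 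The main obstacle I anticipate is the bookkeeping needed to make the collars genuinely $R_1$-disjoint while simultaneously respecting the $r$-disjointness of $\mathcal X(r)$ near $Y$ — i.e.\ choosing $r$ large enough relative to $R_1$ that the collar of width $R_1+1$ fits between $Y$ and the first place where the sets $X\setminus Y$ can come close to each other — and the verification that the collar family is sFDC \emph{uniformly} as a metric family rather than merely member by member; both are handled by the coarse-invariance and subfamily stability already recorded in the excerpt.
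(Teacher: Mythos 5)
Your overall strategy (peel off $Y(r)$, use the $r$-disjointness of $\mathcal X(r)$ for the first decomposition, then run the resulting sFDC chains in parallel with padding) is the right one, but the central decomposition you actually write down fails, and the failure comes precisely from the collar construction, which is both unnecessary and harmful here. Each set $X'\setminus Y$ (for $X'\in\mathcal X$) contains points at every distance from $Y$, so it meets the collars $K_k=\{z\mid (k-1)(R_1+1)<d(z,Y)\le k(R_1+1)\}$ of every index; hence $d(X'\setminus Y,\,K_1)=0$ and the family $\mathcal Y_1=\{X'\setminus Y\}\cup\{K_{2j-1}\}$ is not $R_1$-disjoint --- it is not even disjoint. (The collars are in any case subsets of $\bigcup_{X'}(X'\setminus Y)$ and contribute nothing to the cover.) The collar trick is needed in the Finite Sum Theorem, where one must carve the second summand into annuli around the first; here the region around $Y(r)$ is already covered by the sets $X'\setminus Y$, so there is nothing to carve.

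The repair is to drop the collars entirely. The definition of an $R$-decomposition only requires each of the two families $\mathcal V_1,\mathcal V_2$ to be $R$-disjoint \emph{internally}; no separation between $\mathcal V_1$ and $\mathcal V_2$ is demanded, so your worry that $Y$ abuts the sets $X'\setminus Y$ is moot. Take $r=R_1$, $\mathcal V_1=\{Y(R_1)\}$ (a singleton family, vacuously $R_1$-disjoint) and $\mathcal V_2=\mathcal X(R_1)$ ($R_1$-disjoint by hypothesis); then $\{X\}$ is $R_1$-decomposable over $\{Y(R_1)\}\cup\mathcal X(R_1)$, the union of a family with sFDC and a family of subspaces of members of $\mathcal X$, hence of two sFDC families. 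This is exactly the paper's proof. Your final paragraph --- continuing with the tail $R_2<R_3<\dots$ and padding the shorter of the two chains with repetitions of its terminal bounded family (legitimate, since any family is $R$-decomposable over itself via singleton subfamilies) --- is correct and in fact supplies a detail the paper leaves implicit; it goes through unchanged once the first step is fixed, and the machinery about collars being coarsely equivalent to $Y$ can be deleted.
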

\begin{proof} Given $R_1<R_2<\dots$, find $Y(R_1)\subset X$ such that $Y(R_1)$ has sFDC and the family $\mathcal X(R_1)$ is $R_1$-disjoint. We see that $X$ is $R_1$-decomposable over the family $\{Y(R_1)\}\cup \mathcal X(R_1)$ and $X=\cup\{Y(R_1)\}\cup(\cup\mathcal X(R_1))$.

Since the space $Y(R_1)$ and the family $\mathcal X$ have the sFDC, we conclude that there exist a natural number $n$ and families $\mathcal X_i$, $i=2,\dots,n$, such that $\{Y(R_1)\}\cup \mathcal X(R_1)$ is $R_2$-decomposable over $\mathcal X_2$, $\mathcal X_i$ is $R_{i+1}$-decomposable over $\mathcal X_{i+1}$ for all $i=2,\dots,n-1$, and $\mathcal X_n$ is a bounded family.
\end{proof}

\begin{ex}
In view of Proposition~\ref{2} the examples of groups with sFDC come from examples of groups with FDC like $\oplus_{i=1}^{\infty}\Z$ with the metric
$$d((x_i),(y_i))=\sum_{i=1}^\infty i|x_i-y_i|$$ (see~\cite{NY}). The natural groups to investigate here are $\Z\wr \Z$
with the word metric and
$\oplus_{i=1}^{\infty}\Z$ with the metric $$d((x_i),(y_i))=\sum_{i\in I}|x_i-y_i|+i$$ where $I=\{i\in\N\mid x_i\ne y_i\}$.

In view of Theorem~\ref{3} the examples of groups without sFDC come from groups without property A. Thus, Gromov monster groups are such
~\cite{Gr2},\cite{AD}. They are finitely presented and even could have a finite c Eilenberg-McLane complex $K(\pi,1)$~\cite{Sa}. Infinitely generated groups without property A and hence without sFDC are easier to produce. For example,  the Thompson group $F$ with the word metric with respect to the  generating set
$$F=\langle x_0,x_1,x_2,\dots\mid x_nx_k=x_kx_{n+1}\text{ for all }k<n\rangle$$ does not have  property A~\cite{WCh}. Of course, as it has been mentioned in the introduction, the question whether the Thompson group $F$ has the property A for  a proper invariant metric is an open problem.
\end{ex}

\section{Game theoretic asymptotic property C}

In the spirit of the FDC, one can define a game theoretic version of property C.

\begin{defin} We say that a metric space $X$ has the game theoretic asymptotic property C if there is a winning strategy for player I in the following game. Player II challenges player I by choosing $R_1>0$, then player I chooses an $R_1$-disjoint uniformly bounded family $\mathcal U_1$. Then player II  chooses $R_2>0$ and player I chooses an $R_1$-disjoint uniformly bounded family $\mathcal U_2$ and so on. Player I wins if there is $k$ such that the family $\cup_{i=1}^k\mathcal U_i$ is a cover of $X$.
\end{defin}

Let $\mathcal V$ be a family of nonempty subsets of a metric space $X$ and $R>0$. We say that $V,W\in\mathcal V$ are $R$-connected if there exist $V=V_0,V_1,\dots,V_n=W$ in $\mathcal V$ and $y_0\in V_0$, $x_i,y_i\in V_i$, $i=1,\dots,n-1$, and $x_n\in V_n$ such that $d(y_i,x_{i+1})\le R$ for every $i=0,\dots,n-1$. We denote by $\mathcal V^R$ the family whose elements are the unions of the equivalence classes of the $R$-connectedness relation.

\begin{prop}
A space $X$ has the game theoretic asymptotic property C if and only if $\mathrm{asdim}\, X=0$.
\end{prop}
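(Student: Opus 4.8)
The plan is to prove both implications by producing explicit winning strategies, so that determinacy of the game is never invoked. The easy implication is immediate: if $\as X=0$ then for every $R>0$ there is a uniformly bounded $R$-disjoint family covering $X$, so whatever $R_1$ player II opens with, player I answers with such a cover $\mathcal U_1$ and has already won at $k=1$. For the converse I prove the contrapositive: if $\as X\ge 1$ then player II has a strategy that forbids player I from ever covering $X$, so player I has no winning strategy.

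The first step is a reformulation of $\as X\ge 1$ in terms of chains. For $s>0$ call a finite sequence $x_0,\dots,x_m$ in $X$ an \emph{$s$-chain} if $d(x_j,x_{j+1})\le s$ for all $j$. Since $\as X\ge 1$, there is $R_0>0$ at which $X$ has no uniformly bounded $R_0$-disjoint cover; I claim then that for $\lambda_0=R_0+1$ the space $X$ contains $\lambda_0$-chains joining pairs of points that are arbitrarily far apart. Indeed, if all $\lambda_0$-chains in $X$ had endpoints within some bound $M$, then the partition of $X$ into $\lambda_0$-chaining components would be uniformly bounded (each component has diameter $\le M$) and would consist of pieces pairwise at distance $\ge\lambda_0=R_0+1$, hence an $R_0$-disjoint cover --- a contradiction. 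Because an $s'$-chain is an $s$-chain whenever $s'\le s$, the same conclusion holds with $\lambda_0$ replaced by any $s\ge\lambda_0$.

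The technical core is a chain-repair lemma: if $C\subseteq X$ contains an $s$-chain joining two points at distance $\ge N$, and $\mathcal A$ is any $s$-disjoint family of subsets of $C$ with $\mathrm{mesh}(\mathcal A)\le D$, then $C\setminus\bigcup\mathcal A$ contains a $(2s+D)$-chain joining two points at distance $\ge N-2(D+s)$. The proof is the obvious one: consecutive points of the chain lie within $s$ of each other while distinct members of $\mathcal A$ are farther than $s$ apart, so the chain can never step directly from one member of $\mathcal A$ to another; consequently its visits to $\bigcup\mathcal A$ break into excursions lying inside a single member $A$, each internal excursion is replaced by one jump of length $\le s+\mathrm{diam}(A)+s\le 2s+D$, and the at most two excursions containing the original endpoints are simply deleted, which moves each endpoint by at most $D+s$.

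Player II now plays as follows: set $s_0=\lambda_0$; on turn $i$ declare $R_i=s_{i-1}$, and after seeing player I's response $\mathcal U_i$ set $D_i=\mathrm{mesh}(\mathcal U_i)$ and $s_i=2s_{i-1}+D_i$ --- all computable from the moves made so far, and $s_i>s_{i-1}$, so the challenges strictly increase. Put $C_i=X\setminus\bigcup_{j\le i}\bigcup\mathcal U_j$. An induction on $i$, applying the chain-repair lemma to $C=C_{i-1}$, $\mathcal A=\{U\cap C_{i-1}\mid U\in\mathcal U_i\}$ (which is $R_i$-disjoint with mesh $\le D_i$), $s=s_{i-1}=R_i$ and $D=D_i$, shows that $C_i$ still contains $s_i$-chains joining pairs of points arbitrarily far apart. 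In particular $C_i\ne\emptyset$, so player I has not won at stage $i$; and since $R_{i+1}=s_i$, no uniformly bounded $R_{i+1}$-disjoint family can cover $C_i$ (members of such a family are more than $s_i$ apart, so an $s_i$-chain cannot pass from one to another, forcing every $s_i$-chain in $C_i$ to have bounded span, contrary to what was just shown), so whatever uniformly bounded $R_{i+1}$-disjoint family $\mathcal U_{i+1}$ player I plays next, $\bigcup_{j\le i+1}\bigcup\mathcal U_j$ still omits a point of $C_i$ and player I has not won at stage $i+1$ either. Thus the game continues forever without player I ever winning. I expect the main obstacle to be exactly this control of the complement $C_i$ over successive rounds: it is $X$ with the unions of several unrelated uniformly bounded families removed, so the theory of asymptotically $0$-dimensional spaces cannot be invoked on it directly; the chain-repair lemma is what transports ``long chains survive'' from $C_{i-1}$ to $C_i$ at the controlled scale $s_i$, and the observation that $s_i$ depends only on information already revealed is what legitimizes player II's adaptive choice $R_{i+1}=s_i$.
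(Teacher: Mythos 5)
Your proof is correct, and its converse direction runs along a genuinely different track from the paper's. Both arguments use the same easy forward implication and the same kind of adaptive strategy for player II (escalate the challenge according to the mesh of player I's last family: you take $R_{i+1}=2R_i+\mathrm{mesh}(\mathcal U_i)$, the paper takes $R_{i+1}=R_i+\mathrm{mesh}(\mathcal U_i)$), but the mechanisms for concluding differ. The paper argues by contradiction: assuming player I covers $X$ at some finite stage $k$, it amalgamates the families $\mathcal U_k,\mathcal U_{k-1},\dots,\mathcal U_1$ backwards (gluing each $A\in\mathcal U_k$ with the nearby members of $\mathcal U_{k-1}$, etc., and finally passing to $R_1$-connected components) into a single uniformly bounded $R$-disjoint cover of $X$, contradicting $\mathrm{asdim}\,X>0$. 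You instead prove the contrapositive by maintaining a positive invariant: starting from the chain characterization of $\mathrm{asdim}\,X>0$ (existence of $s$-chains with arbitrarily distant endpoints, which is the same fact the paper exploits implicitly through its $\mathcal V^R$ construction), your chain-repair lemma transports ``long chains survive'' from $C_{i-1}$ to $C_i=X\setminus\bigcup_{j\le i}\bigcup\mathcal U_j$ at the controlled scale $s_i=2s_{i-1}+D_i$, so the uncovered set is never empty and player I never wins. What your route buys: it avoids the paper's somewhat delicate backwards amalgamation (whose definition of $A'$ and verification of $R_{k-1}$-disjointness are only sketched), it yields the slightly stronger conclusion that player II has an explicit strategy forcing the game to last forever, and the inductive invariant is verified in full. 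What the paper's route buys: it is shorter and isolates a reusable coarse fact, namely that finitely many uniformly bounded $R_i$-disjoint families with $R_{i+1}\ge R_i+\mathrm{mesh}(\mathcal U_i)$ can be merged into one uniformly bounded $R_1$-disjoint family. One cosmetic remark: as stated, your chain-repair lemma is vacuous or false in the degenerate case where the whole chain lies inside a single member of $\mathcal A$ (possible only when $N\le D$), so you should either assume $N>2(D+s)$ or note that the degenerate case never occurs in the application, where $N$ is taken arbitrarily large; this does not affect the argument.
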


\begin{proof} Suppose that $\mathrm{asdim}\, X=0$. Then given $R>0$, player I is able to find a uniformly bounded $R$-disjoint cover of $X$.

Now, suppose that  $\mathrm{asdim}\, X>0$. There is $R>0$ for which there is no $R$-disjoint uniformly bounded cover of $X$. Let $R_1=R+1$. Suppose that player I has made $n$ moves and chooses $R_i$-disjoint uniformly bounded families $\mathcal U_i$, $i=1,\dots,n$. Then player II takes $R_{n+1}=R_n+\mathrm{mesh}(\mathcal U_n)$.

Suppose that there is $k$ such that $\mathcal U=\cup_{i=1}^k\mathcal U_i$ is a cover of $X$.

For every $A\in\mathcal U_k$, let $A'=\cup\{B\in \mathcal U_{k-1}\mid d(A,B)<R_k/4 \}$.
Define $$\mathcal V_{k-1}=\{A'\mid A\in\mathcal U_k\}\cup \{B\in \mathcal U_{k-1}\mid B\cap A'=\emptyset\text{ for all }A\in\mathcal U_k\}.$$ From the choice of $R_k$ it follows that the family $\mathcal V_{k-1}$ is $R_{k-1}$-disjoint,  uniformly bounded and $\cup\mathcal V_{k-1}=\cup(\mathcal U_k\cup\mathcal U_{k-1})$.

Proceeding as above one can construct families $\mathcal V_i$, $i=k-1, k-2,\dots,2,1$, with the following properties:
\begin{enumerate}
\item $\mathcal V_i$ is uniformly bounded;
\item $\mathcal V_i$ is $R_i$-disjoint;
\item $\cup\mathcal V_i=\cup(\mathcal U_k\cup\mathcal U_{k-1}\cup\dots\cup \mathcal U_i)$.
\end{enumerate}

Therefore, the family  $(\cup_{i=1}^k\mathcal U_i)^{R_1}$ is an $R$-disjoint uniformly bounded cover of $X$ and we obtain a contradiction.
\end{proof}

Let $G$   denote the group $\bigoplus_{i=1}^{\infty}\Z$ supplied with the proper  metric
$$d((x_i),(y_i))=\sum_{i=1}^\infty i|x_i-y_i|.$$
It was proven in~\cite{NY}, Proposition 2.9.1 that $G$  has FDC.
\begin{question}\label{Q}
(a) Does the group $G$
have  asymptotic property C?

(b) Does every group with sFDC have asymptotic property C?
\end{question}
It turns out that the answer to the part (a) of the question is affirmative~\cite{Ya}.

\section{Game theoretic approach in classical dimension theory}

We recall that a space $X$ is called countable dimensional if it can be presented as the countable union of 0-dimensional subsets.

\begin{lem}\label{c-d}
Suppose that for a compactum $X$ for every $\epsilon>0$ there is a disjoint family of open sets $U_1,\dots U_k$
of $diameter(U_i)<\epsilon$ with $X\setminus W_{\epsilon}$  countable dimensional where
$W_{\epsilon}=\cup_{i=1}^kU_k$. Then $X$ is countable dimensional.
\end{lem}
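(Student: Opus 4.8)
The plan is to reduce the problem to the classical countable-dimensional decomposition theory by exhibiting $X$ as a countable union of sets, each of which is a countable union of $0$-dimensional pieces. The natural choice of scale is $\epsilon_n = 1/n$; for each $n$ apply the hypothesis to obtain a disjoint open family $U_1^{(n)},\dots,U_{k_n}^{(n)}$ with $\operatorname{diam} U_i^{(n)} < 1/n$ and with the complement $C_n := X \setminus W_{1/n}$ countable dimensional, where $W_{1/n} = \bigcup_i U_i^{(n)}$. First I would record that since the $U_i^{(n)}$ are \emph{pairwise disjoint} and \emph{open}, they are also relatively clopen inside $W_{1/n}$, so each $U_i^{(n)}$ is clopen in the open set $W_{1/n}$; moreover an open subset of a $0$-dimensional-in-the-large sense need not be $0$-dimensional, so I cannot directly say $W_{1/n}$ is $0$-dimensional. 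Instead, the key observation is that the decomposition $W_{1/n} = \bigsqcup_i U_i^{(n)}$ into sets of diameter $<1/n$ realizes $W_{1/n}$ as a space with a clopen partition of mesh $<1/n$; intersecting the partitions over all $n$ produces, on the residual set, a clopen base witnessing $0$-dimensionality.

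The precise step I would carry out next: set $A := \bigcap_{n=1}^\infty W_{1/n}$ and $B := X \setminus A = \bigcup_{n=1}^\infty C_n$. By hypothesis each $C_n$ is countable dimensional, hence $B$ is a countable union of countable-dimensional sets, which is again countable dimensional by the standard countable sum theorem for countable dimensionality. It therefore suffices to prove that $A$ is countable dimensional; in fact I expect $A$ to be $0$-dimensional. To see this, note that for $x \in A$ and any $n$, the unique $U_i^{(n)}$ containing $x$ meets $A$ in a set that is open in $A$, has diameter $<1/n$, and — because distinct $U_i^{(n)}$ are at positive distance or at least disjoint open sets — is also closed in $A$ (its complement in $A$ is the union of the other $U_j^{(n)} \cap A$ together with $C_n \cap A = \emptyset$, which is open). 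Thus $\{U_i^{(n)} \cap A\}_{i,n}$ is a base for $A$ consisting of clopen sets of arbitrarily small diameter, so $\operatorname{ind} A = 0$ (here I use that $X$, hence $A$, is separable metrizable). Combining, $X = A \cup B$ with $A$ zero-dimensional and $B$ countable dimensional, so $X$ is countable dimensional.

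The main obstacle I anticipate is the claim that each $U_i^{(n)} \cap A$ is \emph{closed} in $A$ — equivalently, that the partition $\{U_i^{(n)}\}_i$ covers a neighborhood of $A$, so that no point of $A$ is a limit of points outside $W_{1/n}$. This is exactly where $C_n = X\setminus W_{1/n}$ being only countable dimensional (not empty) could bite: a point of $A$ could a priori be a boundary point of $W_{1/n}$. The fix is to be slightly more careful in choosing the scales or the sets: using compactness of $X$ and shrinking, I would replace $W_{1/n}$ by a slightly smaller open set whose closure is still covered by a disjoint open family of mesh $<1/n$, or alternatively argue that the "boundary part" $\bigcup_n \partial W_{1/n}$ is itself contained in $\bigcup_n C_n$ up to a countable-dimensional error and absorb it into $B$. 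Handling this boundary bookkeeping cleanly — so that $A$ genuinely has a clopen neighborhood base — is the crux; everything else is the routine countable sum theorem for countable-dimensional spaces.
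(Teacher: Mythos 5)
Your argument is correct and is essentially the paper's own proof: both decompose $X$ as the residual set $A=\bigcap_n W_{1/n}$ (zero-dimensional, because the traces $U_i^{(n)}\cap A$ form a relatively clopen partition of $A$ of mesh $<1/n$) together with $\bigcup_n (X\setminus W_{1/n})$, a countable union of countable dimensional sets; the paper merely packages the zero-dimensionality of $A$ as an embedding into an inverse limit of the disjoint unions $\coprod_i\overline{U^{(1)}_{i_1}\cap\dots\cap U^{(n)}_{i_n}}$ rather than exhibiting a clopen base directly. The obstacle you raise in your final paragraph is not actually there: closedness of $U_i^{(n)}\cap A$ is needed only in the subspace topology of $A$, and your own computation (the complement in $A$ is $\bigcup_{j\neq i}(U_j^{(n)}\cap A)$ since $A\cap C_n=\emptyset$) already settles it, so no shrinking or boundary bookkeeping is required.
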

\begin{proof} Let $X_n=X\setminus W_{1/n}$.
Note that $$X=\cup_{n=1}^{\infty}W_{\frac{1}{n}}\cup(\cap_{n=1}^{\infty}X_n).$$
We show that $F=\cap_{n=1}^{\infty}X_n$ is 0-dimensional. For that we show that $F$ is homeomorphic
to a subset of 0-dimensional compactum obtained as the inverse limit of a sequence
 $$
Y_1 \stackrel{\phi^2_1}\leftarrow Y_2\stackrel{\phi^3_2}\leftarrow Y_3\leftarrow\dots.
$$
Here $Y_1$ is the disjoint union $\coprod\overline{U^1_i}$ of the closures in $X$ of elements of the $\epsilon$-family with $\epsilon=1$,
$Y_2$ consists of the disjoint union of the closures of intersections $U^1_i\cap U^2_j$ with the bonding map $\phi^2_1:Y_2\to Y_1$
the union of the inclusions and so on.
\end{proof}

The following notion is introduced by Haver \cite{H}.

A
 metric space $(X, d)$ is said to have {\em property C} if for
each sequence of positive numbers $\{\varepsilon_i\}_{i=1}^\infty$, there exists a sequence of disjoint collections of
open sets $\{\mathcal U^i\}_{i=1}^\infty$ such that $\mathrm{mesh}\,\mathcal U^i<\varepsilon_i$, $i\in\mathbb N$, and $\cup_{i=1}^\infty\mathcal U^i$ is a cover of $X$.

\begin{defin}
We say that a metric space $X$ has the {\em game theoretic property C} if there is a winning strategy for player I in the following game. Player II challenges player I by choosing $\varepsilon_1>0$, then player I chooses a disjoint family of open sets $\mathcal U_1$ with $\mathrm{mesh}\ \mathcal U_1<\varepsilon_1$. Then player II  chooses $\varepsilon_1>0$ and player I chooses a disjoint family of open sets $\mathcal U_2$ with $\mathrm{mesh}\ \mathcal U_2<\varepsilon_2$.and so on. Player I wins if  the family $\cup_{i=1}^k\mathcal U_i$ is a cover of $X$.
\end{defin}

\begin{thm}
A compact metric space $X$ has the game theoretic property C if and only if it is countable dimensional.
\end{thm}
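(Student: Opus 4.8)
The plan is to prove both implications, using Lemma~\ref{c-d} as the engine for the ``only if'' direction and a direct strategy construction for the ``if'' direction.

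For the ``if'' direction, suppose $X$ is compact and countable dimensional, say $X=\bigcup_{j=1}^\infty Z_j$ with each $Z_j$ zero-dimensional. The standard fact from classical dimension theory is that a zero-dimensional (separable metric) space has property C in a very strong sense: for any single $\varepsilon>0$ there is a disjoint open (in the subspace) family of mesh $<\varepsilon$ covering it. Using normality/compactness of $X$ one can swallow a fixed $Z_j$ at stage $j$: given the challenger's $\varepsilon_j$, player I produces a disjoint family $\mathcal U_j$ of open subsets of $X$ of mesh $<\varepsilon_j$ whose union covers $Z_j$. After countably many moves $\bigcup_j\bigcup\mathcal U_j\supseteq\bigcup_j Z_j=X$; but the game is won only when finitely many stages suffice. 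Here is where compactness is used: once $\bigcup_{i\le k}\mathcal U_i$ is an open cover of the compactum $X$ for some finite $k$, player I wins. The subtle point is that player I does not know in advance how the $Z_j$ sit, so the strategy must, at stage $j$, cover $Z_j\setminus\bigcup_{i<j}\bigl(\bigcup\mathcal U_i\bigr)$ (a zero-dimensional, hence again property-C-friendly, subspace) by a disjoint family of small open sets; an application of the shrinking/compactness argument shows that after finitely many stages the accumulated open family actually covers $X$, because the ``uncovered'' compact remainder is eventually empty. I would phrase this using a decreasing sequence of compact remainders $F_k=X\setminus\bigcup_{i\le k}\bigl(\bigcup\mathcal U_i\bigr)$ whose intersection is empty, so some $F_k=\emptyset$.

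For the ``only if'' direction, assume player I has a winning strategy. Run the game feeding the constant sequence $\varepsilon_i=\epsilon$ (or, more carefully, a sequence decreasing to a value $<\epsilon$); since the strategy wins, there is a finite $k=k(\epsilon)$ and disjoint open families $\mathcal U_1,\dots,\mathcal U_k$ of mesh $<\epsilon$ with $\bigcup_{i\le k}\mathcal U_i$ covering $X$. Now I want to reduce the number of families to one at the cost of allowing a countable-dimensional remainder, so that Lemma~\ref{c-d} applies. Take $W_\epsilon=\bigcup\mathcal U_1$, a disjoint open family of diameter $<\epsilon$. The complement $X\setminus W_\epsilon$ is covered by $\bigcup_{i=2}^k\mathcal U_i$, that is, by $k-1$ disjoint open families of small mesh. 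By a well-known theorem (a space covered by finitely many disjoint open families, i.e.\ of ``small inductive dimension'' bounded in this partition sense, has finite covering dimension, hence is countable dimensional --- indeed finite dimensional) the set $X\setminus W_\epsilon$ is countable dimensional. Thus the hypothesis of Lemma~\ref{c-d} is verified for every $\epsilon>0$, and the lemma gives that $X$ is countable dimensional.

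\textbf{Main obstacle.} The delicate step is the ``if'' direction: manufacturing a \emph{winning} (finite-length, on every play) strategy from mere countable dimensionality. Writing $X=\bigcup Z_j$ only yields, a priori, a play of the game that succeeds in the limit after infinitely many moves, which is not a win. The crux is to argue that compactness forces finiteness: at each stage player I enlarges to genuinely open subsets of $X$ of the prescribed mesh covering the current zero-dimensional compact remainder, so the nested compact remainders $F_k$ have empty intersection and therefore some $F_k$ is empty by compactness --- at which point the accumulated family is already a cover and player I has won. Making the choice of $\mathcal U_j$ at stage $j$ depend only on the data available then (the previous moves and the current challenge $\varepsilon_j$), while still guaranteeing the remainder shrinks to the empty set, is the only place requiring genuine care; everything else is the standard zero-dimensional ``separation by clopen small sets'' argument together with the already-available Lemma~\ref{c-d}.
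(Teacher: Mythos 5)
Your ``if'' direction is essentially the paper's argument: cover the $j$-th zero-dimensional piece at stage $j$ by a disjoint family of open subsets of $X$ of mesh less than the challenged $\varepsilon_j$ (enlarging clopen-in-$Z_j$ pieces to disjoint open sets of $X$), and then use compactness of $X$ to conclude that the accumulated family is already a cover after finitely many stages; there is no difficulty in making this a legitimate strategy, since each move depends only on the current challenge and the previous moves.

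The ``only if'' direction, however, contains a genuine gap. Your reduction to Lemma~\ref{c-d} rests on the claim that $X\setminus W_\epsilon$ is countable dimensional because it is covered by the $k-1$ disjoint open families $\mathcal U_2,\dots,\mathcal U_k$ of mesh $<\epsilon$. That ``well-known theorem'' is false: being covered by finitely many disjoint open families of small mesh \emph{for a single value of} $\epsilon$ imposes no dimensional restriction at all. Every compact metric space is covered by finitely many open balls of diameter $<\epsilon$, and each ball is by itself a disjoint family; concretely, the Hilbert cube admits an open cover of mesh $<1$ splitting into two disjoint families (pull back a low-order cover of $[0,1]$ via the first coordinate), yet it is not countable dimensional. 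A bound on dimension requires covers of bounded order at \emph{all} scales, whereas in your argument the number $k=k(\epsilon)$ of families produced by the strategy may grow without bound as $\epsilon\to 0$, so no finite-dimensionality (or countable-dimensionality) of $X\setminus W_\epsilon$ follows. The paper avoids this entirely by proving the contrapositive: if $X$ is not countable dimensional, the contrapositive of Lemma~\ref{c-d} supplies an $\epsilon_1$ such that \emph{whatever} disjoint open family of mesh $<\epsilon_1$ player I plays, the (compact) uncovered remainder is again not countable dimensional; iterating, player II chooses $\epsilon_2,\epsilon_3,\dots$ so that the remainder is never empty, hence player I has no winning strategy. If you want to keep your direction of argument (from a winning strategy to the hypothesis of Lemma~\ref{c-d}), you would need a substantially different way of certifying that the complement of the first family is countable dimensional; as written, that step fails.
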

\begin{proof}
First, we show that every compact countable dimensional space $X$ has the game theoretic property C. Let $X=\cup_{i=1}^\infty Y_i$, where all $Y_i$ are zero-dimensional.

Let $\varepsilon_1>0$. Find a disjoint cover $\mathcal V_1$ with $\mathrm{mesh}\ \mathcal V_1<\varepsilon_1/2$ of $Y_1$ by clopen (in $Y_1$) subsets. Remark that, for every $V\in \mathcal V_1$ and every $x\in V$, we have $d(x, (\cup\mathcal V_1)\setminus V)>0$.   We let $$U_V=\cup\{B_{\min\{\varepsilon_1/4,d(x, (\cup\mathcal V_1)\setminus V)\}}(x)\mid x\in V\}.$$ The family $\mathcal U_1=\{U_V\mid V\in\mathcal V_1\}$ is a disjoint family of open in $X$ sets such that  $\mathrm{mesh}\ \mathcal U_1<\varepsilon_1$ and $\cup\mathcal U_1\supset Y_1$.

Given $\varepsilon_i>0$, one can similarly find a disjoint family $ \mathcal U_i$ of open in $X$ sets such that  $\mathrm{mesh}\ \mathcal U_i<\varepsilon_i$ and $\cup\mathcal U_i\supset Y_i$. Then the family $\cup_{i=1}^\infty\mathcal U_i$ is a cover of  $\cup_{i=1}^\infty Y_i=X$. Since $X$ is compact, there exists $k\in\mathbb N$ such that $\cup_{i=1}^k\mathcal U_i$ is a cover of $X$.

Now, assume that $X$ is not countable dimensional. By Lemma~\ref{c-d} there is $\epsilon_1$ such that for every open disjoint $\epsilon_1$ family
 $U_1,\dots U_k$ the complement is not countable dimensional. Then the the first player (Bad) pick up this $\epsilon_1$. No matter
 what first player (Good) does the complement will not be countable dimensional. Therefore by Lemma~\ref{c-d} there is number $\epsilon_2$
and so on. The process will never stop, i.e, the Good player will never win.
\end{proof}

REMARK. This theorem was also proven in~\cite{Bab}.

\end{document}